\newtheorem{theorem}{Theorem}
\newtheorem{proposition}[theorem]{Proposition}
\newtheorem{question}{Question}
\theoremstyle{definition}
\newtheorem{Main}{Theorem}
\def\N{\mathbb N}
\def\C{\mathbb C}
\def\R{\mathbb R}
\def\T{\mathbb T}
\def\Z{\mathbb Z}
\begin{document}

\begin{titlepage}
  \title{\LARGE{\textbf{Some remarks on the optimality of the Bruno-R{\"u}ssmann condition}}}%
  \author{Abed Bounemoura \\
    CNRS - PSL Research University\\
    (Universit{\'e} Paris-Dauphine and Observatoire de Paris)}
\end{titlepage}

\maketitle

\begin{abstract}
We prove that the Bruno-R{\"u}ssmann condition is optimal for the analytic preservation of a quasi-periodic invariant curve for an analytic twist map. The proof is based on Yoccoz's corresponding result for analytic circle diffeomorphisms and the uniqueness of invariant curves with a given irrational rotation number. We also prove a similar result for analytic Tonelli Hamiltonian flow with $n=2$ degrees of freedom; for $n \geq 3$ we only obtain a weaker result which recovers and slightly improves a theorem of Bessi.  
\end{abstract}

\section{Introduction}\label{s1}

Given $n \geq 2$, a vector $\omega \in \R^n$ satisfies the Bruno-R{\"u}ssmann condition, and we will write $\omega \in \mathrm{BR}$, if
\begin{equation}\label{BR}
\int_{1}^{+\infty}\frac{\ln(\Psi_{\omega}(Q))}{Q^2}dQ <
+\infty \tag{$\mathrm{BR}$}
\end{equation} 
where
\begin{equation*}
\Psi_{\omega}(Q)=\max\left\{|k\cdot\omega|^{-1}\; | \; k \in \Z^n, \; 0 < |k|\leq Q\right\}.
\end{equation*}
The expression in~\eqref{BR} is just one of the many equivalent ways of defining this Bruno-R{\"u}ssmann condition. Bruno (\cite{Bru71}, ~\cite{Bru72}, ~\cite{Bru89}) and R{\"u}ssmann (\cite{Rus80}, ~\cite{Rus89}, ~\cite{Rus94}, ~\cite{Rus01}) have proved that $\omega \in \mathrm{BR}$ is a sufficient condition for several analytic small divisors problems: among others, for the linearization of a holomorphic germ at a non-resonant fixed point, for the linearization of a torus diffeomorphism isotopic to the identity (respectively a torus vector field) close to a non-resonant translation (respectively close to a non-resonant constant vector field) and for the preservation of a non-resonant quasi-periodic invariant torus in a non-degenerate Hamiltonian system close to integrable.

For $n=2$, $\omega=(1,\alpha) \in \mathrm{BR}$ if and only if $\alpha$ satisfies the following Bruno condition, that we shall write $\alpha \in \mathrm{B}$: 
\begin{equation}\label{B}
\sum_{n \in \N}\frac{\log q_{n+1}}{q_n} < +\infty \tag{$\mathrm{B}$}
\end{equation}     
where $q_n$ is the denominator of the $n^{th}$-convergent of $\alpha$. It is a deep result of Yoccoz (\cite{Yoc88},~\cite{Yoc95}) that if $\alpha \notin \mathrm{B}$, then the quadratic polynomial
\[ P_\lambda(z)=\lambda z+z^2, \quad \lambda=e^{2\pi i \alpha} \]
is not analytically linearizable. Other examples of non-Bruno non-linearizable germs were later given by Geyer (\cite{Gey01}). Using this, Yoccoz was able to prove that if $\alpha \notin B$, there exist, arbitrarily close to the rotation $\alpha$, analytic circle diffeomorphisms which are topologically but not analytically conjugate to $\alpha$ and thus in the continuous case, if $\omega \notin \mathrm{BR}$, there exist, arbitrarily close to the constant vector field $\omega$, analytic vector fields on $\T^2$ which are topologically but not analytically conjugate to $\omega$ (see Theorems~\ref{thY1} and~\ref{thY2} below for more precise statements). The condition $\alpha \in \mathrm{B}$ (or equivalently $\omega \in \mathrm{BR}$) is also known to be optimal in other problems in $\C^2$, for vector fields close to a non-resonant singular point (\cite{PM97}) and for the complex area-preserving map known as the semi-standard map (\cite{Mar90}).

Unfortunately, to the best of our knowledge, the Bruno-R{\"u}ssmann condition is not known to be optimal for low-dimensional Hamiltonian problems such as the analytic preservation of invariant curves for twist maps. Here it is important to point out that unlike the other problems we mentioned which deal only with the existence of an analytic conjugacy to the linear model, in the Hamiltonian case the conclusions of KAM-like theorems are two-fold: it gives the existence of an analytic invariant curve together with the existence of an analytic conjugacy of the restricted dynamics on the curve to the linear model. The best known result for twist maps is due to Forni (\cite{For94})\footnote{Unfortunately, at several places in the literature (for instance~\cite{Gen15} and other references by the same author) it is stated that $\alpha \in \mathrm{B}$ is optimal for the existence of an analytic invariant circle for the standard map in the perturbative regime which depends analytically in the small parameter; we would like to point out that this statement is incorrectly deduced from results of Marmi (\cite{Mar90}) and Davie (\cite{Dav94}) and thus the optimality of $\alpha \in \mathrm{B}$ for the standard map is still an open question (see \cite{MM00} where this observation is also made).}. To describe his result, let us first remark that $\alpha \in \mathrm{B}$ obviously implies that $\alpha \in \mathrm{R}$ in the sense that
\begin{equation}\label{R}
\lim_{n \rightarrow +\infty} \frac{\log q_{n+1}}{q_n}=0 \tag{$\mathrm{R}$} 
\end{equation}
but clearly the converse is not true. The condition that $\alpha \in \mathrm{R}$ is in fact the necessary and sufficient condition for the linearized problem (the so-called cohomological equation) to have a solution in the analytic topology (\cite{Rus75}). Using results of Mather (\cite{Mat86}, \cite{Mat88}) and Herman (\cite{Her83}), Forni proved that if an integrable twist map has an invariant curve with rotation number $\alpha \notin \mathrm{R}$, then there exists arbitrarily small analytic perturbation for which there are no (necessarily Lipschitz) invariant curves with rotation number $\alpha$. Observe that this strongly violates the conclusion of the KAM theorem, as the latter would give an analytic invariant curve on which the dynamic is analytically linearizable. For Tonelli Hamiltonian flows with $n \geq 2$ degrees of freedom, a result analogous to Forni's has been obtained by Bessi (\cite{Bes00}). To state it, observe that a generalization of the condition $\alpha \in \mathrm{R}$ is (keeping the same notation) $\omega \in \mathrm{R}$ where
\begin{equation}\label{R}
\lim_{Q \rightarrow +\infty} \frac{\ln(\Psi_{\omega}(Q))}{Q}=0 \tag{$\mathrm{R}$} 
\end{equation}
and that again this is the necessary and sufficient condition to solve the cohomological equation in the analytic topology. Bessi proved that if $\omega \notin \mathrm{R}$, then there exists arbitrarily small perturbation of the integrable Hamiltonian $H_0(I)=\frac{1}{2}(I_1^2+\cdots+I_n^2)$ in the analytic topology for which there is no invariant $C^1$ Lagrangian graph on which the dynamic is $C^1$ conjugated to the linear flow of frequency $\omega$.

The purpose of this note is to prove that the condition $\alpha \in \mathrm{B}$ is optimal for the analytic KAM theorem for twist maps, in the sense that if $\alpha \notin \mathrm{B}$, then there exist arbitrarily small perturbations of an arbitrary integrable twist map for which there are no analytic invariant curves on which the dynamic is analytically conjugated to $\alpha$. We refer to Theorem~\ref{th1} for a more precise statement. One has to observe that this result does not improve Forni's result, as in our example, the perturbed map will have an analytic invariant curve on which the dynamic is topologically conjugated to $\alpha$, yet there will be no analytic conjugacy and this is sufficient to guarantee that the conclusions of the KAM theorem do not hold. One can considered Forni's result as a ``destruction" of invariant circle with rotation number $\alpha \notin \mathrm{R}$, while our result can be considered as a ``destruction" of the dynamic on the invariant circle with rotation number $\alpha \notin \mathrm{B}$. For perturbations of Tonelli Hamiltonians, we will obtain in Theorem~\ref{th2} a similar result showing the optimality of $\omega \in \mathrm{BR}$ for $n=2$ while for $n \geq 3$, we will only obtain in Theorem~\ref{th3} a result similar to Bessi showing that $\omega \in \mathrm{R}$ is necessary: for $n \geq 3$, it is unlikely that $\omega \in \mathrm{R}$ is sufficient and one should not expect $\omega \in \mathrm{BR}$ to be necessary either\footnote{Yoccoz, private communication.}. Even though we will use action-minimizing properties of invariant quasi-periodic curves and tori in an indirect way, our method of proof is very different from those of Forni and Bessi. For Theorem~\ref{th1}, we will use Yoccoz's result showing the necessity of $\alpha \in \mathrm{B}$ for the analytic linearization of circle diffeomorphims, and the well-known fact that an invariant curve for a twist map with a given irrational rotation number is unique. Under some more assumptions, this uniqueness property has been shown to be true for Tonelli Hamiltonians in any number of degrees of freedom by Fathi, Giualiani and Sorrentino (\cite{FGS09}). Using this and a continuous version of Yoccoz's result, we will obtain Theorem~\ref{th2} for the case $n=2$ and for $n \geq 3$, we will make use of a result of Fayad (\cite{Fay02}) on reparametrized linear flows to obtain Theorem~\ref{th3}.

\section{The case of a twist map}\label{s2}

It will be more convenient for us to represent exact area-preserving map of the annulus $\T \times \R$, where $\T=\R/\Z$, by a ``Hamiltonian" generating function defined on the universal cover $\R^2$ of $\T \times \R$ (unlike~\cite{For94} where a ``Lagrangian" generating function is used). Given a smooth function $h: \R^2 \rightarrow \R$, the map
\[ \bar{f}=\bar{f}_h : \R^2 \rightarrow \R^2\]
defined by
\begin{equation*}
\bar{f}(\theta,I)=(\Theta,\mathcal{I}) \Longleftrightarrow
\begin{cases}
\mathcal{I}=I-\partial_\theta h(\theta,\mathcal{I}),\\
\Theta=\theta+\partial_{\mathcal{I}} h(\theta,\mathcal{I})
\end{cases}
\end{equation*}
projects to an exact area-preserving map
\[ f: \T \times \R \rightarrow \T \times \R. \] 
Such a map is an exact area-preserving twist map, or for short twist map in the sequel, if it satisfies the following two conditions:
\begin{itemize}
\item[(a1)] for all $(\theta,I) \in \R^2$, $\partial_I \Theta(\theta,I)>0$;
\item[(a2)] for all $\theta \in \R$, $|\Theta| \rightarrow +\infty$ as $|I| \rightarrow +\infty$ uniformly in $\theta$.
\end{itemize}  
Given such a twist map $f$, an invariant curve $T$ for $f$ will be an essential topological circle such that $f(T)=T$; necessarily, $T$ is a Lipschitz Lagrangian graph. Since $f$ preserves orientation, the restriction $f_{|T}$ has a well-defined rotation number. The following uniqueness result is well-known (see~\cite{KH95} for instance). 
 
\begin{proposition}\label{unique1}
Let $T_0$ and $T_1$ be two invariant curves for a twist map such that $f_{|T_0}$ and $f_{|T_1}$ have the same irrational rotation number. Then $T_0=T_1$.
\end{proposition}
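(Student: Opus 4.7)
The plan is to derive a contradiction via Aubry--Mather theory. Since both invariant curves are Lipschitz Lagrangian graphs (by the definition recorded just before the statement), I write $T_i = \{(\theta, u_i(\theta)) : \theta \in \T\}$ for Lipschitz functions $u_i : \T \to \R$, and suppose towards contradiction that $u_0(\theta_{*}) \neq u_1(\theta_{*})$ for some $\theta_{*} \in \T$.

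The first ingredient is the variational structure of $f$: passing from $h$ to its Lagrangian counterpart, the orbits of $f$ correspond to the critical configurations of a discrete action functional, and a classical theorem of Aubry--Mather guarantees that every orbit on an invariant Lipschitz essential circle with irrational rotation number is action-minimizing. Combined with unique ergodicity of a circle homeomorphism with irrational rotation number, this implies that the unique $f$-invariant probability measure supported on $T_i$ is a Mather measure with rotation number $\alpha$, so $T_i \subset \mathcal{M}_\alpha$, the Mather set of rotation $\alpha$.

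The second ingredient is Mather's graph theorem: $\mathcal{M}_\alpha$ is a Lipschitz graph over its projection to $\T$. Since each $T_i$ already projects onto all of $\T$, the projection of $\mathcal{M}_\alpha$ equals $\T$, so $\mathcal{M}_\alpha = \mathrm{graph}(v)$ for some Lipschitz $v : \T \to \R$. The inclusions $\mathrm{graph}(u_0) \subset \mathrm{graph}(v)$ and $\mathrm{graph}(u_1) \subset \mathrm{graph}(v)$ force $u_0 \equiv v \equiv u_1$, giving $T_0 = T_1$ and contradicting the choice of $\theta_{*}$.

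The step I expect to be the main obstacle is the first one: upgrading ``invariant Lipschitz graph with irrational rotation number'' to ``contained in $\mathcal{M}_\alpha$''. This is a genuine Aubry--Mather statement, resting on the graph property of $T_i$ together with the irrationality of $\alpha$ to rule out any compactly supported action-improving perturbation of an orbit. Mather's graph theorem itself is also classical for twist maps, and once both inputs are granted the remainder of the argument is purely structural.
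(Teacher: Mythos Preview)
The paper does not supply a proof of this proposition; it is simply quoted as a well-known fact with a reference to \cite{KH95}. Your Aubry--Mather argument is a valid route, and it mirrors exactly the strategy behind the continuous analogue (Theorem~\ref{unique2}, due to Fathi--Giuliani--Sorrentino) that the paper invokes later for Tonelli Hamiltonians, so in spirit it fits the paper well.

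One small gap worth flagging: from ``the unique invariant measure $\mu_i$ on $T_i$ is a Mather measure'' you conclude $T_i \subset \mathcal{M}_\alpha$, but a priori this only yields $\mathrm{supp}(\mu_i) \subset \mathcal{M}_\alpha$, and $\mathrm{supp}(\mu_i) = T_i$ amounts to knowing that $f_{|T_i}$ is minimal, i.e.\ has no Denjoy-type wandering intervals. That is true here (action-minimizing orbits are recurrent, so every point of $T_i$ lies in the unique minimal set of the circle homeomorphism $f_{|T_i}$), but it is an extra step you have not written. The detour through measures is in any case unnecessary: once you know every orbit on $T_i$ is action-minimizing with rotation number $\alpha$, the Aubry--Mather non-crossing lemma for minimal configurations with a fixed irrational rotation number already forces the union of all such orbits to be a Lipschitz graph over its projection to $\T$; since each $T_i$ is a full graph contained in that set, $T_0 = T_1$ follows at once. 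This bypasses both the Mather-measure formalism and the Denjoy issue.
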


Now let us explain the local setting in which the KAM theorem applies. Consider a smooth function $h_0: (-1,1) \rightarrow \R$ which satisfies the following conditions:
\begin{itemize}
\item[(b1)] $h_0''(\mathcal{I}) \neq 0$ for all $\mathcal{I} \in (-1,1)$;
\item[(b2)] $h_0'(0)=\alpha$ .
\end{itemize}
Then the exact area-preserving map $f_0$ generated by $h_0$ is integrable, and the dynamic restricted to the invariant curve $T_0=\T \times \{0\}$ is the rotation by $\alpha$. To state the KAM theorem of Bruno and R{\"u}ssmann, we need to define norms for real-analytic functions. Let $h : \R \times (-1,1) \rightarrow \R$ be a real-analytic function and suppose it admits a holomorphic and bounded extension (still denoted by $h$) to the domain
\[ \T_s \times D=\{z=(z_1,z_2) \in \C^2 \; | \; |\mathrm{Im} \; z_1|<s, \; |z_2|<1\} \]
for some $s>0$. In such a case, we simply define
\[ |h|_s=\sup_{z \in \T_s \times D}|h(z)|. \]
Assume that $h_0$ satisfies condition (b1) and (b2) with $\alpha \in \mathrm{B}$, then the KAM theorem states that for any $s>0$, there exists $\varepsilon>0$ such that for any $h_1$ satisfying $|h_1-h_0|_s \leq \varepsilon$, the exact area-preserving map $f_1$ generated by $h_1$ has an analytic invariant curve $T_1$ such that $f_{1|T_1}$ is analytically conjugate to the rotation $\alpha$ (and moreover, $T_1$ analytically converges to $T_0$ as $\varepsilon$ goes to zero). 

The following result shows that the condition that $\alpha \in \mathrm{B}$ cannot be weakened.

\begin{Main}\label{th1}
Assume that $h_0$ satisfies condition (b1) and (b2) with $\alpha \notin \mathrm{B}$. Then for all $\varepsilon>0$ sufficiently small and all $s>0$, there exists $h_1$ such that $|h_1-h_0|_s \leq \varepsilon$ and the exact area-preserving map $f_1$ generated by $h_1$ has no analytic invariant curve $T_1$ such that $f_{1|T_1}$ is analytically conjugate to the rotation $\alpha$.
\end{Main}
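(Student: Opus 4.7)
The plan is to execute the strategy outlined in the introduction: combine Yoccoz's theorem on the necessity of the Bruno condition for analytic linearization of circle diffeomorphisms (Theorem~\ref{thY1}) with the uniqueness of invariant curves in Proposition~\ref{unique1}. Given $s>0$ and $\varepsilon>0$ small, Yoccoz's theorem supplies an analytic circle diffeomorphism $\varphi$, with a holomorphic extension to $\T_s$ satisfying $|\tilde{\varphi}-\mathrm{id}-\alpha|_s \le \varepsilon$, which is topologically conjugate to the rotation $R_\alpha$ of angle $\alpha$ but not analytically conjugate to it. The idea is to engineer a perturbation $h_1$ of $h_0$ for which the zero-section $T_0=\T\times\{0\}$ is still invariant under $f_1$ and the induced dynamics on $T_0$ is exactly $\varphi$.

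Concretely, let $\tilde{\varphi}$ be the standard lift of $\varphi$ to $\R$, set $u(\theta):=\tilde{\varphi}(\theta)-\theta-\alpha$ (a $1$-periodic real-analytic function with $|u|_s\le\varepsilon$), and define
\[
h_1(\theta,\mathcal{I}) := h_0(\mathcal{I}) + \mathcal{I}\,u(\theta).
\]
Then $\partial_\theta h_1(\theta,0)=0$, so the implicit equation defining $\bar{f}_{h_1}$ at $I=0$ forces $\mathcal{I}=0$; hence $T_0$ is invariant under $f_1$, and the induced map on $T_0$ reads $\theta\mapsto\theta+h_0'(0)+u(\theta)=\tilde{\varphi}(\theta)$, that is, $\varphi$. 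The perturbation is small: $|h_1-h_0|_s \le |u|_s \le \varepsilon$. The twist condition (a1) is preserved because the identity $\partial_I\Theta=h_0''(\mathcal{I})/(1+u'(\theta))$, a short implicit-function computation exploiting the linearity of the perturbation in $\mathcal{I}$, together with Cauchy estimates on $u'$ on a slightly smaller strip, ensures $\partial_I\Theta$ retains the sign of $h_0''$; condition (a2) is local and poses no issue.

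To close the argument, suppose for contradiction that $f_1$ admits an analytic invariant curve $T_1$ on which $f_{1|T_1}$ is analytically conjugate to $R_\alpha$. Then $f_{1|T_1}$ has rotation number $\alpha$, which is irrational. Since $T_0$ is also $f_1$-invariant and $f_{1|T_0}=\varphi$ is topologically conjugate to $R_\alpha$, the restriction $f_{1|T_0}$ likewise has rotation number $\alpha$. Proposition~\ref{unique1} then forces $T_1=T_0$, and so $\varphi=f_{1|T_0}$ would be analytically conjugate to $R_\alpha$, contradicting Yoccoz's construction.

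The construction is essentially forced by the ansatz ``destroy the linearized dynamics on $T_0$ without destroying $T_0$ itself,'' and the argument has no conceptual obstacle. The one point requiring care is extracting from Yoccoz the right quantitative statement, namely that for every fixed complex strip $\T_s$ one can find a non-analytically-linearizable diffeomorphism arbitrarily close to $R_\alpha$ in the corresponding analytic norm; granted this, the generating-function perturbation above and the uniqueness of invariant curves yield the conclusion with no further work.
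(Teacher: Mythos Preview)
Your proposal is correct and follows essentially the same approach as the paper: the same generating-function perturbation $h_1(\theta,\mathcal{I})=h_0(\mathcal{I})+\mathcal{I}\,u(\theta)$ (with your $u$ being the paper's $v$), the same verification that $T_0=\T\times\{0\}$ stays invariant with induced dynamics equal to Yoccoz's non-analytically-linearizable diffeomorphism, and the same contradiction via Proposition~\ref{unique1}. The only cosmetic difference is that the paper handles the global condition (a2) by extending $f_1$ from $\T\times(-1,1)$ to $\T\times\R$ with a bump function rather than calling it ``local''.
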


The restriction on $\varepsilon$ only comes from the condition (a1) and is thus independent of the choice of $s$. The proof of Theorem~\ref{th1} will follow easily from the following theorem of Yoccoz.

\begin{theorem}[Yoccoz]\label{thY1}
Assume $\alpha \notin \mathrm{B}$. Then for all $\varepsilon>0$ and all $s>0$, there exists an orientation-preserving analytic circle diffeomorphism with a lift of the form
\[ u(\theta)=\theta+\alpha+v(\theta), \quad |v|_s \leq \varepsilon \]
which is topologically but not analytically conjugate to the rotation $\alpha$.
\end{theorem}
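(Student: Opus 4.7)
The strategy is to transfer Yoccoz's non-linearization theorem for quadratic germs into the circle-diffeomorphism setting. The input (Yoccoz's Siegel theory) is that for $\lambda=e^{2\pi i\alpha}$ with $\alpha \notin \mathrm{B}$, the polynomial $P_\lambda(z)=\lambda z + z^2$ is not analytically linearizable at $0$, i.e.\ its Siegel disk has radius zero. The plan is to construct an analytic circle diffeomorphism $u$ of the required form whose holomorphic extension to $\T_s$ carries, at some interior fixed point $z_0$, a germ inheriting this non-linearizability, so that an analytic linearization of $u|_\T$ on the circle would propagate inward and yield a linearization of $P_\lambda$, a contradiction.

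For the construction I would use a Douady-Ghys-style quasi-conformal surgery: glue the rescaled germ $\lambda z + \rho z^2$ into a standard holomorphic model on a half-disk or annulus whose boundary contains $\T$; the gluing is quasi-regular with Beltrami differential supported in a thin annular region, and the Measurable Riemann Mapping Theorem straightens it into a genuinely holomorphic self-map whose boundary restriction is an analytic orientation-preserving diffeomorphism $u$ with rotation number $\alpha$. A more elementary alternative is to prescribe directly the Taylor data of $v$ at a chosen complex point $z_0 \in \T_s$ --- namely $v(z_0)\equiv-\alpha\pmod 1$, $v'(z_0)=\lambda-1$, $v''(z_0)\neq 0$ --- so that $z_0$ is an elliptic complex fixed point of the extension of $u$ whose germ is holomorphically conjugate to $P_\lambda$. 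In both routes, a scaling argument on the small parameter $\rho$ gives $|v|_s\le\varepsilon$ for arbitrary prescribed $s,\varepsilon>0$.

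Topological conjugacy of $u$ to $R_\alpha$ is immediate from Denjoy's theorem, since $u$ is analytic (hence $C^2$) with irrational rotation number $\alpha$. For the negation of analytic conjugacy I argue by contradiction: if $h:\T\to\T$ is an analytic diffeomorphism with $h\circ u = R_\alpha\circ h$, then $h$ extends holomorphically to a complex neighborhood of $\T$, and iterating the intertwining relation along the dynamics of $u$ pushes this extension inward. Reaching a neighborhood of $z_0$, one produces an analytic linearizer of the germ of $u$ there, hence, via the construction, an analytic linearizer of $P_\lambda$ at the origin, contradicting Yoccoz's germ theorem.

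The hardest part is precisely this last propagation step: $R_\alpha$ has no complex fixed points, so $h$ cannot simply extend by analytic continuation through $z_0$ (the equation $h(z_0)=h(z_0)+\alpha$ is incompatible with $\alpha$ irrational), and one must instead extend $h$ iteratively along the forward/backward dynamics of $u$ and combine it with a Siegel-type limiting procedure supplied by the explicit holomorphic model produced by the surgery. Rigorously controlling this inward extension and producing a bona fide local linearizer at $z_0$ is where the genuine content of the theorem resides; this is the technical heart of Yoccoz's analysis of nested orbits accumulating on elliptic fixed points.
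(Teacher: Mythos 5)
First, a point of comparison: the paper does not prove this statement at all --- Theorem~\ref{thY1} is imported as a black box from Yoccoz (\cite{Yoc88}, \cite{Yoc95}), and the whole design of the paper is to leverage it without reproving it. So your attempt is not competing with an argument in the text; it is a sketch of Yoccoz's own theorem. Your global strategy (reduce to the non-linearizability of the quadratic germ $P_\lambda$ and transfer it to the circle via a germ-to-annulus-map correspondence) is indeed the historically correct one, but as written the proposal has a concrete fatal flaw and otherwise defers all the real content.

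The ``more elementary alternative'' cannot work. If $z_0$ is to be a fixed point of the extension of the circle map to $\T_s$, then $u(z_0)\equiv z_0 \pmod{\Z}$, i.e.\ $v(z_0)=k-\alpha$ for some $k\in\Z$, hence $|v|_s\geq |k-\alpha|\geq \mathrm{dist}(\alpha,\Z)>0$. This is incompatible with $|v|_s\leq\varepsilon$ as soon as $\varepsilon<\mathrm{dist}(\alpha,\Z)$: a small analytic perturbation of an irrational rotation has \emph{no} fixed points anywhere in $\T_s$, so there is no interior elliptic point at which to plant the germ $P_\lambda$, and the entire ``propagate the conjugacy inward to a fixed point'' scheme collapses with it. (You half-notice this when you observe that $h(z_0)=h(z_0)+\alpha$ is impossible, but the obstruction already kills the construction of $u$ itself, not just the extension of $h$.) The surgery route is the right idea in spirit --- Yoccoz's actual argument passes through a geometric correspondence between univalent germs with multiplier $e^{2\pi i\alpha}$ and analytic circle diffeomorphisms of rotation number $\alpha$, under which analytic linearizability on one side is equivalent to analytic linearizability on the other --- but everything that makes the theorem true is left unproved: the construction of the correspondence, the fact that it preserves the rotation number and non-linearizability, and, crucially, the quantitative control needed to make $v$ analytic and $\varepsilon$-small on a strip $\T_s$ of \emph{arbitrarily large prescribed} width $s$, which is a genuine constraint that a generic quasiconformal straightening does not deliver. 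As it stands the proposal identifies the correct input (the germ theorem) but does not supply a working bridge to the stated conclusion.
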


\begin{proof}[Proof of Theorem~\ref{th1}]
Let us fix $s>0$ and $\varepsilon>0$, and consider the function $v : \R \rightarrow \R$ given by Theorem~\ref{thY1} which extends to $\T_s$ and satisfy $|v|_s \leq \varepsilon$. We set
\[ h_1(\theta,\mathcal{I})=h_0(\mathcal{I})+v(\theta)\mathcal{I} \]
so that obviously
\[ |h_1-h_0|_s \leq |v|_s \leq \varepsilon. \]
Let $f_0$ and $f_1$ be the maps generated by respectively $h_0$ and $h_1$. Obviously, the condition (b1) implies that condition (a1) is satisfied by $f_0$ but only for all $(\theta,I) \in \R \times (-1,1)$ and assuming $\varepsilon$ sufficiently small, the same remains true for $f_1$. Now using a bump function, the map $f_1$, initially defined on $\T \times (-1,1)$, can be extend to a smooth twist map from $\T \times \R$ to itself in such a way that both (a1) and (a2) holds true.

Now for all $(\theta,I) \in \R \times (-1,1)$, the lift $\bar{f}_1$ is defined by
\[ \bar{f}_1(\theta,I)=(\Theta,\mathcal{I}) \]
where
\begin{equation}\label{eq1}
\begin{cases}
\mathcal{I}=I-\partial_\theta h_1(\theta,\mathcal{I})=I-v'(\theta)\mathcal{I},\\
\Theta=\theta+\partial_{\mathcal{I}} h_1(\theta,\mathcal{I})=\theta+h_0'(\mathcal{I})+v(\theta).
\end{cases} 
\end{equation}
Since $u(\theta)=\theta+\alpha+v(\theta)$ is the lift of an orientation-preserving diffeomorphism of the circle, we have
\[ u'(\theta)=1+v'(\theta)>0 \]
and thus~\eqref{eq1} can be written again as
\begin{equation}\label{eq2}
\begin{cases}
\mathcal{I}=(u'(\theta))^{-1}I,\\
\Theta=\theta+h_0'\left((u'(\theta))^{-1}I\right)+v(\theta).
\end{cases} 
\end{equation}
From~\eqref{eq2} it is now clear that $T_0=\T \times \{0\}$ is invariant by $f_1$, and since $h'(0)=\alpha$ by (b2), the restriction $f_{1|T_0}$ is nothing but the dynamic induced by $u$ given by Theorem~\ref{thY1}, hence it is topologically but not analytically conjugate to the rotation $\alpha$. 

To conclude, we argue by contradiction and assume the existence of an analytic invariant curve $T_1$ such that $f_{1|T_1}$ is analytically conjugate to the rotation $\alpha$. Since both $T_0$ and $T_1$ are invariant by the twist map $f_1$ and have the same irrational rotation number $\alpha$, it follows from Proposition~\ref{unique1} that $T_0=T_1$ but then $f_{1|T_0}=f_{1|T_1}$ is analytically conjugate to the rotation $\alpha$, which is absurd.   
\end{proof}

\section{The case of a Hamiltonian flow}\label{s3}

By a suspension argument (see for instance~\cite{KP94} or~\cite{TreBook} for the analytic case), Theorem~\ref{th1} gives a result for Hamiltonian systems with $n=1,5$ degrees of freedom with a convex (non-degenerate) integrable part, and thus also for Hamiltonian systems with $n=2$ degrees of freedom with a quasi-convex (iso-energetically non-degenerate) integrable part.

For Hamiltonian systems with $n \geq 2$ degrees of freedom and a convex integrable part, to use the argument in the proof of Theorem~\ref{th1} one first needs to have an analog of Proposition~\ref{unique1}, and fortunately, such a result was proved in~\cite{FGS09}. The setting is the one of Tonelli Hamiltonians, which is a natural generalization of exact area-preserving twist maps. For more details on Tonelli Hamiltonians and what we will describe next, we refer to~\cite{Sor15}.

Let $H : \T^n \times \R^n \rightarrow \R$ be a smooth Hamiltonian, then it is said to be Tonelli if it satisfies the following two conditions
\begin{itemize}
\item[(A1)] for all $(\theta,I) \in \T^n \times \R^n$, $\nabla_I^2 H(\theta,I)$ is a (uniformly) positive definite quadratic form;
\item[(A2)] for all $\theta \in \T^n$, one has 
\[ \lim_{|I| \rightarrow +\infty} \frac{H(\theta,I)}{|I|}=+\infty. \]
\end{itemize}  
In this context, the role of invariant curves is played by Lipschitz Lagrangian graphs, so let $T$ be such a graph, and assume it is invariant be the flow of a Tonelli Hamiltonian $H$. Given a measure supported on $T$ and invariant by the Hamiltonian flow, one can define a rotation vector (or a Schwartzman asymptotic cycle) as an element of $H_1(\T^n,\R)\simeq \R^n$ and by considering all invariant measures, one can define a rotation set for the Hamiltonian flow restricted to $T$. We will say that $T$ is Schwartzman strictly ergodic with rotation vector $\omega \in \R^n$ if its rotation set reduces to $\omega$ and there exists at least one measure with full support in $T$. Simple examples (the one we will actually use later) are when the restricted flow on $T$ is either topologically conjugate to the linear flow of frequency $\omega$ or obtained form the latter by a smooth reparametrization. The main result of~\cite{FGS09} gives the following statement.

\begin{theorem}[Fathi-Giuliani-Sorrentino]\label{unique2}
Let $T_0$ and $T_1$ be two Lipschitz Lagrangian graphs invariant by the flow of a Tonelli Hamiltonian and which are Schwartzman strictly ergodic with the same rotation vector $\omega \in \R^n$. Then $T_0=T_1$.
\end{theorem}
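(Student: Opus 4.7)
The plan is to use Mather--Ma{\~n}{\'e}--Fathi variational theory to reduce the desired equality $T_0=T_1$ to a uniqueness statement about Mather measures. Being a Lipschitz Lagrangian graph, each $T_i$ is the graph of a closed Lipschitz 1-form on $\T^n$, hence carries a well-defined de Rham cohomology class $c_i \in H^1(\T^n,\R)\simeq \R^n$. I would work with the Tonelli Lagrangian $L$ Legendre-dual to $H$, Mather's convex function $\alpha : H^1(\T^n,\R)\to \R$, its Fenchel conjugate $\beta : H_1(\T^n,\R)\to \R$, the Mather sets $\widetilde{\mathcal{M}}(c)$, and the Fenchel--Young duality $\alpha(c)+\beta(\omega)\geq c\cdot \omega$, with equality characterizing minimizing measures of rotation vector $\omega$ and class $c$.

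The first step is to show that the full-support invariant probability measure $\mu_i$ on $T_i$ is a Mather measure for the cohomology class $c_i$. This uses crucially that an invariant Lipschitz Lagrangian graph of class $c_i$ is calibrated for $c_i$ in Fathi's sense: along the flow on $T_i$ the action of the twisted Lagrangian $L-\eta_{c_i}$ is extremal, so integrating against $\mu_i$ gives $\int(L-\eta_{c_i})\,d\mu_i=-\alpha(c_i)$. Since the rotation vector of $\mu_i$ is $\omega$, this rewrites as $\int L\, d\mu_i=\beta(\omega)$, and forces $c_i \in \partial \beta(\omega)$.

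The key transfer step is then the following: because $c_1 \in \partial\beta(\omega)$ and $\mu_0$ also has rotation vector $\omega$,
\[
\int L\, d\mu_0 - c_1\cdot\omega = \beta(\omega)-c_1\cdot\omega = -\alpha(c_1),
\]
which means $\mu_0$ itself is a Mather measure for the cohomology class $c_1$. Consequently $\mathrm{supp}(\mu_0) \subseteq \widetilde{\mathcal{M}}(c_1)$, and by Fathi's graph property (any invariant Lipschitz Lagrangian graph of class $c$ contains $\widetilde{\mathcal{M}}(c)$), one has $\widetilde{\mathcal{M}}(c_1) \subseteq T_1$. Since $\mu_0$ has full support in $T_0$, this gives $T_0 \subseteq T_1$; exchanging the roles of the two graphs yields $T_0=T_1$.

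The main obstacle, in my view, is the first step: identifying $\mu_i$ as a Mather (action-minimizing) measure for the class $c_i$. In the smooth case this is immediate since an invariant smooth Lagrangian graph corresponds to a classical solution of the Hamilton--Jacobi equation and its orbits minimize the twisted action. In the Lipschitz regime one must replace classical solutions by weak KAM solutions, and the calibration of orbits on $T_i$ requires a genuine appeal to Fathi's theory together with the graph structure in order to upgrade mere invariance of $\mu_i$ to action minimization.
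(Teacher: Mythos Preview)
The paper does not contain a proof of this statement: Theorem~\ref{unique2} is quoted from \cite{FGS09} and used as a black box in the proofs of Theorems~\ref{th2} and~\ref{th3}. So there is no ``paper's own proof'' to compare against.

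That said, your outline is essentially the argument of Fathi--Giuliani--Sorrentino. The architecture you describe --- identify each full-support invariant measure $\mu_i$ on $T_i$ as a $c_i$-minimizing Mather measure, use Fenchel duality $\alpha(c)+\beta(\omega)=c\cdot\omega$ to conclude $c_i\in\partial\beta(\omega)$, then cross over to see that $\mu_0$ is also $c_1$-minimizing so that $\mathrm{supp}(\mu_0)\subseteq\widetilde{\mathcal{M}}(c_1)\subseteq T_1$, and finally use full support to get $T_0\subseteq T_1$ --- is exactly how \cite{FGS09} proceeds. The point you single out as the main obstacle (showing that any invariant probability measure supported on an invariant Lipschitz Lagrangian graph is action-minimizing for the cohomology class of that graph) is precisely the technical heart of their paper; they establish it via weak KAM theory, using that the graph of a closed Lipschitz $1$-form $\eta$ with $[\eta]=c$ which is invariant under the Tonelli flow is calibrated by a Lipschitz subsolution of the Hamilton--Jacobi equation $H(x,c+d_xu)=\alpha(c)$, so that every orbit on the graph is $c$-minimizing. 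The inclusion $\widetilde{\mathcal{M}}(c_1)\subseteq T_1$ that you invoke is likewise proved there (indeed the stronger inclusion of the Aubry set holds). Your sketch is correct and faithful to the source; it simply goes beyond what the present paper attempts.
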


As before, we now come back to a local setting in which the KAM theorem applies. Consider a smooth Hamiltonian function $H_0: B \rightarrow \R$, where $B=(-1,1)^n \subseteq  \R^n$ is a unit ball and assume it satisfies the following conditions:
\begin{itemize}
\item[(B1)] $\nabla^2 H_0(I)$ is a positive definite quadratic form for all $I \in B$;
\item[(B2)] $\nabla H_0 (0)=\omega$ .
\end{itemize}
Let us observe that for the KAM theorem, condition (B1) is not needed as the weaker assumption that $\nabla^2 H_0(I)$ is a non-degenerate quadratic form is sufficient. If $H_0$ satisfies (B1) and (B2), its vector field $X^{H_0}$ is integrable and its restriction to the invariant torus $T_0=\T^n \times \{0\}$ is given by the constant vector field $\omega$.  

Let $H : \T^n \times B \rightarrow \R$ be a real-analytic function and suppose it admits a holomorphic and bounded extension to the domain
\[ \T^n_s \times D=\{z=(z_1,\dots,z_{2n}) \in \C^{n}/\Z^n \times \C^n \; | \; \max_{1 \leq i \leq n}|\mathrm{Im} \; z_i|<s, \; \max_{1 \leq i \leq n}|z_{n+i}|<1\} \]
for some $s>0$, so that we can define
\[ |H|_s=\sup_{z \in \T^n_s \times D}|H(z)|. \]
Here's a formulation of the KAM theorem for Tonelli Hamiltonians. Assume $H_0$ satisfies condition (B1) and (B2) with $\omega \in \mathrm{BR}$, then for any $s>0$, there exists $\varepsilon>0$ such that for any $H_1$ with $|H_1-H_0|_s \leq \varepsilon$, the Hamiltonian flow of $H_1$ has an analytic Lagrangian invariant torus $T_1$ which is a graph and such that the restriction $X^{H_1}_{|T_1}$ is analytically conjugate to the vector field $\omega$ (and moreover, $T_1$ analytically converges to $T_0$ as $\varepsilon$ goes to zero). 

For $n=2$, we can prove that the condition that $\omega \in \mathrm{BR}$ cannot be weakened. 

\begin{Main}\label{th2}
Let $n=2$, and assume that $H_0$ satisfies condition (B1) and (B2) with $\alpha \notin \mathrm{BR}$. Then for all $\varepsilon>0$ sufficiently small and all $s>0$, there exists $H_1$ such that $|H_1-H_0|_s \leq \varepsilon$ and the Hamiltonian flow of $H_1$ has no analytic Lagrangian invariant graph $T_1$ such that $X^{H_1}_{|T_1}$ is analytically conjugate to the vector field $\omega$.
\end{Main}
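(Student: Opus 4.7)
The plan is to imitate the proof of Theorem~\ref{th1}, replacing Yoccoz's result on circle diffeomorphisms (Theorem~\ref{thY1}) by its continuous counterpart Theorem~\ref{thY2} on vector fields on $\T^2$, and replacing the twist-map uniqueness Proposition~\ref{unique1} by the Fathi-Giuliani-Sorrentino uniqueness Theorem~\ref{unique2}. Concretely, given $s>0$ and $\varepsilon>0$ fix a real-analytic vector field on $\T^2$ of the form $\omega+V(\theta)$, with $V$ holomorphic and bounded on $\T^2_s$ with $|V|_s\leq\varepsilon$, which is topologically but not analytically conjugate to $\omega$; this is what Theorem~\ref{thY2} provides since $\omega\notin\mathrm{BR}$. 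Then set
\[ H_1(\theta,I)=H_0(I)+V(\theta)\cdot I. \]
Since $|I|<1$ on the domain $D$, we have $|H_1-H_0|_s\leq|V|_s\leq \varepsilon$ (up to an innocuous rescaling of $\varepsilon$).

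Next I would verify that the geometric setup works. The Hamilton equations associated to $H_1$ read
\[ \dot{\theta}=\nabla H_0(I)+V(\theta),\qquad \dot{I}=-DV(\theta)^{T}I, \]
so $T_0=\T^2\times\{0\}$ is invariant, the $I$-equation vanishes identically on $T_0$, and the restricted flow on $T_0$ is exactly $\dot{\theta}=\omega+V(\theta)$, which by construction is topologically but not analytically conjugate to the constant vector field $\omega$. The Tonelli condition (A1) is inherited from (B1) since $\nabla_I^{2}H_1=\nabla^{2}H_0$; the growth condition (A2) is recovered by extending $H_1$ outside a neighbourhood of $T_0$ with a bump function, exactly as in the proof of Theorem~\ref{th1}, provided $\varepsilon$ is small enough to preserve the positive-definiteness during the cut-off.

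Finally I would close the argument by contradiction: suppose $T_1$ is an analytic Lagrangian graph invariant by the flow of $H_1$ with $X^{H_1}_{|T_1}$ analytically conjugate to the constant field $\omega$. Then $T_1$ is Schwartzman strictly ergodic with rotation vector $\omega$. The same is true of $T_0$, because the topological conjugacy to the linear flow pushes Haar measure to a fully-supported invariant measure and preserves the Schwartzman rotation vector (rotation vectors are invariants of orientation-preserving homeomorphisms isotopic to the identity). Theorem~\ref{unique2} then forces $T_0=T_1$, but this contradicts the fact that $X^{H_1}_{|T_0}$ is not analytically conjugate to $\omega$.

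The main obstacle I anticipate is not the core conjugacy/uniqueness argument, which copies the one-dimensional case almost verbatim, but rather the bookkeeping needed to verify that $T_0$ genuinely satisfies the Schwartzman strict ergodicity hypothesis of Theorem~\ref{unique2}: one must argue that the topological conjugacy provided by Yoccoz's Theorem~\ref{thY2} preserves both the rotation set (which must reduce to $\{\omega\}$) and the existence of a fully supported invariant measure. Once this verification is done, together with the routine check that extending $H_1$ by a cut-off does not destroy the Tonelli structure near $T_0$, the proof goes through as above.
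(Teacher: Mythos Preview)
Your proposal is correct and follows the paper's proof essentially verbatim: the same perturbation $H_1(\theta,I)=H_0(I)+V(\theta)\cdot I$, the same verification that $T_0=\T^2\times\{0\}$ is invariant with restricted flow $\omega+V(\theta)$, the same Tonelli extension by cut-off, and the same contradiction via Theorem~\ref{unique2}. The obstacle you flag about Schwartzman strict ergodicity of $T_0$ is not a real difficulty, as the paper notes just before Theorem~\ref{unique2} that a flow topologically conjugate to a linear flow of frequency $\omega$ is automatically Schwartzman strictly ergodic with rotation vector $\omega$.
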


To prove Theorem~\ref{th2}, we will need the follwoing continuous version of Theorem~\ref{thY1}.

\begin{theorem}[Yoccoz]\label{thY2}
Assume $\omega \notin \mathrm{BR}$. Then for all $\varepsilon>0$ and all $s>0$, there exists an analytic vector field on $\T^2$ of the form
\[ U(\theta)=\omega+V(\theta), \quad |V|_s \leq \varepsilon \]
which is topologically but not analytically conjugate to vector field $\omega$.
\end{theorem}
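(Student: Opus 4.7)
The plan is to derive Theorem~\ref{thY2} from its discrete counterpart Theorem~\ref{thY1} via a real-analytic suspension construction. First I reduce to the case $\omega = (1,\alpha)$ with $\alpha \notin \mathrm{B}$: for $n=2$, $\omega \notin \mathrm{BR}$ is equivalent (up to a positive rescaling of time, which affects neither the norms nor the conjugacy conclusions in a damaging way) to $\alpha = \omega_2/\omega_1 \notin \mathrm{B}$. Given $\varepsilon,s>0$, I apply Theorem~\ref{thY1} with auxiliary parameters $\varepsilon', s'$ (to be fixed at the end) and obtain an analytic circle diffeomorphism $u$ with lift $\tilde u(\theta)=\theta+\alpha+v(\theta)$, $|v|_{s'}\leq \varepsilon'$, topologically but not analytically conjugate to $R_\alpha$.

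The construction proper is the analytic suspension. Consider the mapping torus
\[ M_u = \R^2/\Gamma, \qquad \Gamma = \langle A,B\rangle, \quad A(\theta,s)=(\theta+1,s),\ B(\theta,s)=(\tilde u(\theta),s-1), \]
which inherits from $\tilde u$ the structure of a compact real-analytic surface carrying the real-analytic suspension vector field $\partial_s$. When $v\equiv 0$, the lattice $\Gamma_0 = \Z(1,0) + \Z(\alpha,-1)$ is linear, and an explicit linear change of coordinates gives an analytic diffeomorphism $\Phi_0:M_{R_\alpha}\to \T^2$ which conjugates $\partial_s$ to the constant vector field $\omega=(1,\alpha)$. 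For $v$ analytic and small, $M_u$ is an analytic deformation of $M_{R_\alpha}$; I would construct an analytic diffeomorphism $\Phi:M_u\to \T^2$ close to $\Phi_0$ by solving, via an implicit function theorem in a scale of Banach spaces of analytic maps, for a correction of $\Phi_0$ that intertwines the $\Gamma$-action on the source with the $\Z^2$-action on the target. Pushing forward the suspension field by $\Phi$ yields an analytic vector field $U=\omega+V$ on $\T^2$ with $|V|_s$ controlled linearly by $|v|_{s'}$; choosing $\varepsilon',s'$ appropriately one secures $|V|_s\leq\varepsilon$.

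It remains to transfer the conjugacy properties. Topological conjugacy: the topological conjugacy $h$ between $u$ and $R_\alpha$ suspends to a homeomorphism $M_u\to M_{R_\alpha}$ intertwining the two suspension flows; composing with $\Phi$ and $\Phi_0$ gives a topological conjugacy of $U$ to $\omega$ on $\T^2$. Analytic non-conjugacy is argued by contradiction. Since $V$ is small, $\{\theta_1=0\}$ remains a real-analytic global transversal for $U$, and the associated first-return map $P_U$ is analytic and, by construction, analytically conjugate via $\Phi$ to the first-return map of $\partial_s$ to a transversal of $M_u$, namely $u$ itself. If $U$ were analytically conjugate to $\omega$ on $\T^2$, then $P_U$ would be analytically conjugate to the return map of $\omega$, which is the rotation $R_\alpha$; transitivity of analytic conjugacy would then make $u$ analytically conjugate to $R_\alpha$, contradicting Theorem~\ref{thY1}.

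The main obstacle is the analytic uniformization step: producing $\Phi$ analytically and quantifying its distance to $\Phi_0$. What makes this tractable is that the deformation of the lattice $\Gamma_0$ into $\Gamma$ is governed by the single small analytic function $v$, so the functional equations for $\Phi$ reduce to a parametrized family of small-divisor-free analytic equations (the small divisors are absorbed into the nonlinear Poincaré return map rather than appearing in the change of variables); once $\Phi$ is produced with quantitative control, the rest of the argument is essentially formal.
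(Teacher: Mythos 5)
The paper does not actually prove Theorem~\ref{thY2}: it is quoted from Yoccoz (\cite{Yoc88}, \cite{Yoc95}), and the only indication of a proof is the phrase ``and thus in the continuous case'' in the introduction, which points precisely at your strategy of deducing the flow statement from Theorem~\ref{thY1} by suspension. So your overall route is the intended one, and the two ``transfer'' steps are sound: topological conjugacy of $u$ to $R_\alpha$ suspends to a topological conjugacy of the flows, and an analytic conjugacy $h$ of $U$ to the constant field $\omega$ would carry the analytic transversal $\{\theta_1=0\}$ to an analytic circle transverse to $\omega$ in the same homotopy class (the linear part $A\in GL_2(\Z)$ of $h$ must fix $\omega$, hence $A=\mathrm{id}$ since $\alpha$ is irrational), whose holonomy return map is analytically conjugate to $R_\alpha$; this would make $u$ analytically linearizable, contradicting Theorem~\ref{thY1}.

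The genuine gap is the quantitative analytic suspension, which you state but do not prove, and your justification --- that the functional equations for $\Phi$ are ``small-divisor-free'' --- is exactly the point that needs an argument. Reduced to its essence, one must find a small analytic $W$ on $\T^2$, $1$-periodic in $t=\theta_1$, such that the return map of $\dot\theta_1=1$, $\dot\theta_2=\alpha+W$ to $\{\theta_1=0\}$ equals $u$; the linearized equation is $\int_0^1 W(t,\theta+\alpha t)\,dt=v(\theta)$, and in Fourier variables its coefficients are $\frac{e^{2\pi ik\alpha}-1}{2\pi i(m+k\alpha)}$, so the obvious right inverse divides by the small divisor $e^{2\pi ik\alpha}-1$. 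The observation that rescues the scheme is that one may concentrate $\widehat W_{\cdot,k}$ on the resonant mode $m_k$ equal to the nearest integer to $-k\alpha$, for which $\bigl|\tfrac{e^{2\pi ik\alpha}-1}{2\pi i(m_k+k\alpha)}\bigr|=\tfrac{\sin(\pi d_k)}{\pi d_k}\in[2/\pi,1]$ with $d_k$ the distance from $k\alpha$ to $\Z$: the two small divisors cancel, at the cost of a loss of analyticity width by a fixed factor depending only on $\alpha$ (since $|m_k|\leq(|\alpha|+1)|k|$). This loss is harmless because Theorem~\ref{thY1} provides $v$ on an arbitrarily wide strip $s'$, so one chooses $s'$ large in terms of $s$ and $\alpha$ and then closes the nonlinear problem by a standard analytic implicit function theorem with bounded right inverse. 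Without this (or an equivalent) cancellation argument, the implicit-function-theorem step you invoke is unjustified, and with it your proof is complete and consistent with the derivation the paper alludes to.
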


\begin{proof}[Proof of Theorem~\ref{th2}]
The proof is just a continuous version of the proof of Theorem~\ref{th1}. Fixing $s>0$ and $\varepsilon>0$, we define
\[ H_1(\theta,I)=H_0(I)+V(\theta)\cdot I\]
where $V$ is given by Theorem~\ref{thY2} (for the value $\varepsilon/\sqrt{2}$ instead of $\varepsilon$). Clearly
\[ |H_1-H_0|_s \leq \sqrt{2}|V|_s \leq \varepsilon. \]
Observe that the condition (B1) and a smallness assumption on $\varepsilon$ allow again to extend $H_1$ to a smooth function defined on $\T^2 \times \R^2$ which satisfies both (A1) and (A2).

It is clear from the Hamiltonian's equation that $T_0=\T^2 \times \{0\}$ is invariant by the flow of $H_1$, and since $\nabla H_0(0)=\omega$ by (B2), the restriction $X^{H_1}_{|T_0}$ is nothing but the vector field $U$ given by Theorem~\ref{thY2}, hence it is topologically but not analytically conjugate to the vector field $\omega$. 

To conclude, we argue by contradiction and assume the existence of an analytic Lagrangian invariant graph $T_1$ such that $X^{H_1}_{|T_1}$ is analytically conjugate to the vector field $\omega$. As $T_0$ and $T_1$ are invariant by the Hamiltonian flow of $H_1$ which is Tonelli and are both Schwartzman strictly ergodic with the same rotation vector $\omega$, it follows from Theorem~\ref{unique2} that $T_0=T_1$. But then $X^{H_1}_{|T_0}=X^{H_1}_{|T_1}$ is analytically conjugate to the vector field $\omega$, which is absurd.   
\end{proof}

Theorem~\ref{thY2} is not known (and unlikely to be true) for $n \geq 3$, yet the following result was proved by Fayad in~\cite{Fay02}. 

\begin{theorem}[Fayad]\label{thF}
Let $n \geq 2$ and assume $\omega \notin \mathrm{R}$. Then for all $s>0$ sufficiently small and all $\varepsilon>0$, there exists an analytic vector field on $\T^n$ of the form
\[ U(\theta)=\omega+\varphi(\theta)\omega, \quad |\varphi|_s \leq \varepsilon, \quad \int_{\T^n}\varphi(\theta)d\theta=0, \]
which is not topologically conjugate to vector field $\omega$.
\end{theorem}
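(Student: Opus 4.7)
The plan is to exploit the fact that $U=(1+\varphi)\omega$ is a smooth time-reparametrization of the constant flow $X_\omega$: both flows share the same orbit foliation and differ only by speed. Since $\omega\notin\mathrm{R}$ forces $k\cdot\omega\neq 0$ for every $k\in\Z^n\setminus\{0\}$, the constant flow is minimal and uniquely ergodic with Haar measure, and the flow of $U$ inherits minimality and unique ergodicity with invariant probability $d\mu_\varphi$ proportional to $(1+\varphi)^{-1}d\theta$. Any topological conjugacy $h$ between the two flows would, by unique ergodicity, push Haar to $\mu_\varphi$ and therefore be a measure-theoretic isomorphism. As the constant flow is an equicontinuous translation on $\T^n$ with pure point spectrum, it suffices to produce $\varphi$ for which the flow of $U$ has a non-trivial continuous spectral component, e.g., is weakly mixing with respect to $\mu_\varphi$.

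The construction exploits the Liouville character of $\omega$. From $\omega\notin\mathrm{R}$ one extracts a sequence $k_j\in\Z^n$ with $|k_j|\to\infty$ and $\log|k_j\cdot\omega|^{-1}/|k_j|\not\to 0$, so $|k_j\cdot\omega|$ decays faster than $e^{-c|k_j|}$ for some $c>0$ along a sub-subsequence. Set
\[ \varphi(\theta)=\sum_{j\geq 1} c_j\cos(2\pi k_j\cdot\theta) \]
along a sufficiently sparse subsequence, with coefficients $c_j$ of order $e^{-s|k_j|}$; this yields an analytic function on $\T^n_s$ with $|\varphi|_s\leq\varepsilon$ and $\int\varphi\,d\theta=0$ for free. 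The heuristic is that Birkhoff integrals $\int_0^T\cos(2\pi k_j\cdot(\theta+t\omega))\,dt$ grow like $|k_j\cdot\omega|^{-1}$ on time windows $T\ll|k_j\cdot\omega|^{-1}$ before equidistributing, and pulled through the time change they generate shearing between nearby orbits on a wide range of scales. One then rules out eigenfunctions by showing that any hypothetical $f$ with $f\circ\Phi^U_t=e^{2\pi i\lambda t}f$ would have to be nearly constant on sheared orbit segments of length $T_j\asymp|k_j\cdot\omega|^{-1}$, contradicting its being a genuine $L^2$-function on $\T^n$.

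The main obstacle is turning this heuristic into a quantitative estimate: the lacunarity of the $k_j$ and the size of $c_j$ must be arranged so that shearing at scale $|k_j\cdot\omega|^{-1}$ dominates the interference of all other harmonics, and the twisted correlation integrals $\int f\cdot\overline{f\circ\Phi^U_{T_j}}\,d\mu_\varphi$ must be controlled via a van der Corput or stationary-phase device. A separate route is needed in the $n=2$ case, since reparametrizations of linear flows on $\T^2$ admit very restricted spectral behaviour; one should instead reduce to the first-return map of $U$ on a transverse circle, whose return time is a continuous coboundary over the base rotation iff the flows are topologically conjugate, and choose $\varphi$ so that the corresponding cohomological Birkhoff sums are unbounded, contradicting the Gottschalk--Hedlund criterion. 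In both regimes the analytic bookkeeping for $\varphi$ is routine; the entire difficulty is concentrated in the dynamical non-conjugacy step.
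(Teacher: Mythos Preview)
The paper does not prove this theorem: it is quoted from \cite{Fay02} and only its statement is used (the paper even remarks afterwards that Fayad proves more, namely weak mixing for a $G_\delta$-dense set of $\varphi$). So there is no in-paper argument to compare your proposal against; it should be measured against Fayad's original.

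Your outline of the weak-mixing route is broadly in line with what Fayad does: build $\varphi$ as a lacunary trigonometric series along directions $k_j$ with $|k_j\cdot\omega|\le e^{-c|k_j|}$ (available precisely because $\omega\notin\mathrm R$, and compatible with analyticity on $\T^n_s$ once $s<c$, which is the origin of the restriction on $s$ noted in the paper), then rule out nonconstant $L^2$-eigenfunctions for the reparametrized flow. You correctly identify, and explicitly leave open, the hard step---the quantitative control of putative eigenfunctions via the shearing produced at time scales $|k_j\cdot\omega|^{-1}$. That is exactly where the substance of \cite{Fay02} lies, so what you have written is a plan rather than a proof.

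There is one genuine error. You assert that the spectral approach fails for $n=2$ and that a separate Gottschalk--Hedlund argument is required there. This is wrong: Fayad's construction covers $n=2$, and analytic weakly mixing reparametrizations of linear flows on $\T^2$ go back to Shklover. The restriction you may be recalling is Kochergin's theorem that such reparametrizations on $\T^2$ are never \emph{mixing}; weak mixing, which is all you need here, is available in every dimension $n\ge 2$. Conversely, the coboundary route you sketch for $n=2$---choose $\varphi$ so that the Birkhoff integrals $\int_0^T\varphi(\theta+t\omega)\,dt$ are unbounded and invoke Gottschalk--Hedlund to preclude a continuous solution of the cohomological equation---is not specific to $n=2$ either; if it can be made to yield non-conjugacy, it does so uniformly in $n$, though it would not recover the weak-mixing conclusion that Fayad actually obtains.
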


The restriction on $s$ is as follows: if $\omega \notin \mathrm{R}$, then there exists $s_0>0$ such that 
\[ \limsup_{Q \rightarrow +\infty} \frac{\ln(\Psi_{\omega}(Q))}{Q}\geq s_0 \]
and one has to choose $s<s_0$. We have to point out that Fayad's result is in fact much more general than the one we stated (it is not perturbative, valid for a $G^\delta$ dense set of functions $\varphi$ and the resulting vector field $U$ is in fact weakly mixing) but we will only use the above statement. Observe that since the flow of $U$ is a reparametrization (with a function of unit average) of the linear flow of frequency $\omega$, it is Schwartzman strictly ergodic with rotation vector $\omega$. 

Replacing Theorem~\ref{thY2} by Theorem~\ref{thF} in the proof of Theorem~\ref{th2}, one immediately arrives at the following statement.

\begin{Main}\label{th3}
Let $n\geq 2$, and assume that $H_0$ satisfies condition (B1) and (B2) with $\omega \notin \mathrm{R}$. Then for all $\varepsilon>0$ sufficiently small and all $s>0$ sufficiently small, there exists $H_1$ such that $|H_1-H_0|_s \leq \varepsilon$ and the Hamiltonian flow of $H_1$ has no Lipschitz Lagrangian invariant graph $T_1$ such that $X^{H_1}_{|T_1}$ is topologically conjugate to the vector field $\omega$.
\end{Main}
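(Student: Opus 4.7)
The plan is to mirror the proof of Theorem~\ref{th2}, substituting Fayad's Theorem~\ref{thF} for Yoccoz's Theorem~\ref{thY2} as the source of the bad perturbation on the would-be invariant torus. Fix $s>0$ smaller than the threshold $s_0$ supplied by the hypothesis $\omega\notin\mathrm{R}$, and fix $\varepsilon>0$. Theorem~\ref{thF} then furnishes an analytic function $\varphi:\T^n\to\R$ with $\int_{\T^n}\varphi\,d\theta=0$, with $|\varphi|_s$ as small as we wish, and such that the reparametrized linear field $U(\theta)=\omega+\varphi(\theta)\omega$ is not topologically conjugate to the constant field~$\omega$.

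I would then define
\[ H_1(\theta,I)=H_0(I)+\varphi(\theta)\,\omega\cdot I \]
on $\T^n\times B$. A direct sup-norm estimate gives $|H_1-H_0|_s\leq \|\omega\|_1\,|\varphi|_s$, which can be made $\leq\varepsilon$ by shrinking $|\varphi|_s$. Just as in the proof of Theorem~\ref{th2}, assumption (B1) together with a smooth bump-function cut-off lets me extend $H_1$ to a Tonelli Hamiltonian on $\T^n\times\R^n$ satisfying (A1) and (A2). Hamilton's equations read
\[ \dot\theta=\nabla H_0(I)+\varphi(\theta)\omega,\qquad \dot I=-(\omega\cdot I)\,\nabla\varphi(\theta), \]
so $T_0=\T^n\times\{0\}$ is invariant and the restricted flow is $\dot\theta=\omega+\varphi(\theta)\omega=U(\theta)$, precisely Fayad's field.

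To close the argument, I would argue by contradiction. Suppose there exists a Lipschitz Lagrangian invariant graph $T_1$ on which $X^{H_1}_{|T_1}$ is topologically conjugate to the constant field $\omega$; then $T_1$ is Schwartzman strictly ergodic with rotation vector $\omega$ and has a full-support invariant measure (the pull-back of Haar). The torus $T_0$ carries a time-change of the linear flow of frequency $\omega$ by the unit-mean function $1+\varphi$, and so, as remarked after Theorem~\ref{thF}, is also Schwartzman strictly ergodic with rotation vector $\omega$. Theorem~\ref{unique2} then forces $T_0=T_1$, whence $X^{H_1}_{|T_0}=U$ would be topologically conjugate to the constant field $\omega$, contradicting Theorem~\ref{thF}. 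The only real point requiring care is the identification of the rotation vector of $T_0$ with $\omega$, which is exactly why the zero-mean normalization provided by Theorem~\ref{thF} is used; beyond this, the proof is a verbatim continuous transcription of the argument for Theorem~\ref{th2}, and the smallness restriction on $s$ is inherited directly from Fayad's theorem.
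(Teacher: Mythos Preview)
Your proposal is correct and follows exactly the approach the paper indicates: the paper's proof of Theorem~\ref{th3} is the single sentence ``replacing Theorem~\ref{thY2} by Theorem~\ref{thF} in the proof of Theorem~\ref{th2}, one immediately arrives at the following statement,'' and you have written out precisely those details, including the correct identification of the restricted flow on $T_0$ with Fayad's reparametrized field $U$ and the use of the zero-mean condition on $\varphi$ to ensure $T_0$ is Schwartzman strictly ergodic with rotation vector $\omega$.
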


As we already explained, this statement is similar to the main result of~\cite{Bes00}. Yet Bessi's result depends on the choice of $H_0(I)=\frac{1}{2}(I_1^2+\cdots+I_n^2)$ while we can deal with an arbitrary integrable Hamiltonian $H_0$ which is convex in a neighborhood of the origin. Also as it is stated, the main result of~\cite{Bes00} claims the non-existence of a $C^1$ Lagrangian invariant graph $T_1$ such that $X^{H_1}_{|T_1}$ is $C^1$-conjugate to the vector field $\omega$ and thus our conclusion is slightly stronger; yet it seems to us that what is really proved in~\cite{Bes00} is the non-existence of a Lipschitz Lagrangian invariant graph $T_1$ such that $X^{H_1}_{|T_1}$ has all orbits with the same rotation vector $\omega$, in which case our conclusion could be slightly weaker. 

\section{Some questions}\label{s4}

Let us conclude by some questions. It is clear from Forni's result, Bessi's result or Theorem~\ref{th3} that when $\omega \notin \mathrm{R}$, invariant torus with a frequency $\omega$ are destroyed in a rather strong sense. But in Theorem~\ref{th1} and Theorem~\ref{th2} this is not the case if $\omega \notin \mathrm{BR}$ as an invariant analytic torus still exist on which the dynamic is topologically linearizable. So one may ask the following question.

\begin{question}
Assume that $\omega \in \mathrm{R} \setminus \mathrm{BR}$, is it possible to have the existence of a ``regular" invariant Lagrangian torus on which the conjugacy to the linear model is ``less regular"?
\end{question}

We have used quotation marks since we have no idea of what can be expected, the question is basically whether is it possible to prove anything non-trivial under the sole assumption that $\omega \in \mathrm{R}$, which as we already explained, is the condition that guarantees that the cohomological equation can be solved with an arbitrarily small loss of analyticity. Of course, it may well be the case that when $\omega \notin \mathrm{BR}$, the conclusions of Theorem~\ref{th1} and Theorem~\ref{th2} can be strengthened to reach conclusions similar to Forni and Bessi's results.

A second question concerns the assumptions (a1) and (A1). Clearly, (a1) is not a restriction as it is the natural non-degeneracy assumption under which an invariant curve with a prescribed frequency persists. But this is not the case for (A1) as we already pointed out, so we may ask the following question.

\begin{question}
Is it possible to prove Theorem~\ref{th2} and Theorem~\ref{th3} replacing the condition (A1) by the weaker condition that $\nabla_I^2 H_0$ is non-degenerate in a neighborhood of $0$? 
\end{question}

We expect the answer to be yes, at the expense of restricting the conclusion of non-existence to a neighborhood of the unperturbed torus. The role of the condition (A1) is to be able to obtain global uniqueness of invariant torus with a prescribed frequency; without (A1) no such global uniqueness has to be expected yet in view of the statement of the KAM theorem, only local uniqueness would be required. This local uniqueness is known to hold true within the context of KAM theory (see~\cite{Sal04} for instance) but this is not directly applicable to our context, yet we believe that with extra work this can be reached even though we did not pursue this further.

\medskip

\textit{Acknowledgements.} This work was done while the
author was in Cuba, in particular in the very nice caf\'{e} ``Tu t\'{e}" in Santa Clara. The author have also benefited
from partial funding from the ANR project Beyond KAM. 

\addcontentsline{toc}{section}{References}
\bibliographystyle{amsalpha}
\bibliography{BRcondition}

\end{document}